\newtheorem{thm}{Theorem}
\newtheorem{conj}{Conjecture}
\def\C{\mathbb C}
\def\P{\mathbb P}
\begin{document}
\title[KP1 ``lump'' solutions from cuspidal curves]{An algebraic-geometric construction of ``lump'' solutions of the KP1 equation}
\author{John B. Little}
\address{Professor {\it emeritus}, Department of Mathematics and Computer Science,
College of the Holy Cross, Worcester, MA 01610}
\email{jlittle@holycross.edu}

\date{\today}

\maketitle
\begin{abstract}
In this note, we show how certain everywhere-regular real rational function solutions of the KP1 equation (``multi-lumps'') can be 
constructed via the polynomial analogs of theta-functions from singular rational curves with cusps.  The method we use 
can be understood either directly, or as producing a degeneration of the well-understood soliton solutions from nodal singular curves.  From
the second of these points of view, it can be
seen as a variation on the \emph{long-wave limit} technique of Ablowitz and Satsuma from \cite{AS,SA}, as developed
by Zhang, Yang, Li, Guo, and Stepanyants, \cite{Zetal}.  Our main explicit example is a three-lump solution 
constructed via the polynomial analog of the theta-function from a rational curve with two cuspidal singular points, each
 with semigroup $\langle 2,5\rangle$.  (In the theory of curve singularities, these are known as $A_4$ double points.)
We show that there are similar six-lump solutions from a rational curve with two cusps, each with semigroup $\langle 2,7\rangle$ ($A_6$ double points).  
We conjecture that these ideas will generalize to give similar $M$-lump solutions with $M = \frac{N(N+1)}{2}$ for $N > 2$ starting
from rational curves with two singular points with semigroup $\langle 2,2N+1\rangle$ ($A_{2N}$ double points).   Finally 
we present a five-lump solution constructed from a rational curve with two cusps, each with semigroup $\langle 3,4\rangle$.
Similar solutions have been constructed by other methods previously; our contribution is to show how
they arise from the algebraic-geometric setting by considering singular curves with several cusps, as in \cite{ACL}.  
\end{abstract}

\section{Introduction}
We will consider the Kadomtsev-Petviashvili (KP) equation for $u = u(x,y,t)$ in the form
\begin{equation}\label{KP12}
(-4 u_t + 6uu_x + u_{xxx})_x \pm 3u_{yy} = 0.
\end{equation}
Solutions of these PDEs are of considerable interest in physics since they model several different sorts of 
wave phenomena in two space dimensions and time. 
In the PDE literature, taking the $+$ sign on the final term gives what is called the KP2 equation, while the $-$ sign gives the KP1 
equation.  The differences between these cases are often not emphasized by authors discussing the construction
of solutions via the algebraic-geometric techniques studied here.  This is no doubt true because solutions of one form of the
 equation can be taken to solutions of the 
other by a complex rescaling of the independent variables taking $y \mapsto i\cdot y$.  However, the behavior of the solutions for real values of the 
space variables in the two cases is quite distinct.  In particular, the regular real rational function solutions that we are interested in
arise only in the KP1 case.   

The work of Mikio Sato and his school on construction
of solutions via $\tau$-functions corresponding to points of the infinite-dimensional Sato Grassmannian, \cite{S}, has 
illuminated the structure of solutions and shown close connections with combinatorial constructions such as 
Young diagrams for partitions and Schur polynomials.  

Various classes of solutions to KP1 and KP2 (and other related soliton equations) using techniques from algebraic geometry 
have been known since the late 1970's, based on the so-called Krichever construction, which shows how to 
produce points of the Sato Grassmannian starting from data from an algebraic curve.  
As a result, the KP equation has  generated a great deal of interest
in algebraic geometry.  In the most spectacular connection, KP solutions play a key role in the results of Shiota, 
Mulase,  and many others on
the Schottky problem of characterizing the Jacobian varieties of smooth curves among all principally polarized
abelian varieties.  In the algebraic-geometric context, the solutions produced from smooth curves via theta-functions
on their Jacobians have received the greatest amount of attention.  In the PDE context, these are the so-called 
quasiperiodic solutions.   

However, it has long been understood--and the details have become increasingly clear--that other classes of KP solutions have connections with 
other classes of singular curves in a very parallel manner.  For instance, one of the classes of solutions that 
will enter for us at one point are the \emph{soliton} solutions.  These arise in the algebraic-geometric
context by considering the limit of the theta-function as a family of smooth curves of genus $g$ degenerates to 
an irreducible rational curve with $g$ ordinary double points (``nodes'').  The connections here were glimpsed very early
and exploited by Mumford for the construction of KdV and KP solitons  in \cite{M}.   They have been studied in much more detail recently in the works of many 
authors.  The most relevant for our purposes are the papers by Agostini, Fevola, Mandelshtam, and Sturmfels, \cite{AFMS}, 
and the related work of Fevola and Mandelshtam, \cite{FM}.  The fact that \emph{all} real regular KP soliton solutions corresponding
to  $\tau$-functions from points of the totally nonnegative Grassmannian can be expressed by the theta-functions on 
nodal singular curves has recently been established by Kodama in \cite{K}.   

The connection between rational function KP solutions, Schur polynomials and 
polynomial analogs of theta-functions from cuspidal singular curves also has a long history.  We mention
in particular the examples from Pelinovskii and Stepanyants, \cite{PS}, and the foundational article of Buchstaber,  Leykin, and Enolski, \cite{BLE}.  More
recently, this connection been studied in the article \cite{ACL}, which gives more details about the relation between
the polynomial analogs of theta-functions for cuspidal curves and rational KP solutions.  We
will make use of several crucial results from that article.   We will also follow many of the notational and
terminological conventions for singular curves established there, so readers may wish to consult
\cite{ACL} for background material.  

Unfortunately, from the point of view of applied PDE, ``most of'' the rational KP solutions produced from cuspidal
singular curves as in \cite{ACL} are probably of relatively little interest because they tend to be non-regular 
at some real $(x,y)$ for some or all real $t$.   This is because the denominator of the rational solution 
\begin{equation}\label{KPu}
u(x,y,t) = 2 \frac{\partial^2}{\partial x^2} \ln(\tau(x,y,t))
\end{equation}
will ``usually'' vanish for some real $(x,y,t)$ when $\tau(x,y,t)$ is a polynomial with real coefficients. The same will 
be true more generally if $\tau$ is the product of an exponential factor with
exponent linear in $x$ and a polynomial as in Theorem 4.11 from \cite{ACL}.  But in fact we will be somewhat sloppy about the terminology here
and essentially ignore any such exponential factor that might be present, calling the polynomial factor itself the $\tau$-function.  
The reason this is harmless from our point of view  is that in passing from $\tau(x,y,t)$ to $u(x,y,t)$ via \eqref{KPu} the exponential factor contributes nothing to the 
actual KP solution.  A simple observation here is that (the polynomial part of) $\tau(x,y,t)$ must have even total degree in $(x,y)$
in order to obtain everywhere-regular real KP solutions.  

The complex rescaling $y \mapsto i\cdot y$ to go from a KP2 solution to a KP1 solution
can also produce $\tau(x,y,t)$ and $u(x,y,t)$ that take non-real values for
some real $(x,y,t)$.  These solutions are also of less interest for applications.

From this point of view, possibly the most interesting rational KP solutions are 
rational ``lump'' solutions--rational functions $u(x,y,t)$ which are real-valued for all real $x,y,t$, whose denominators never vanish for real $(x,y,t)$,
and which decay to $0$ in all directions for all $t$.  Motivated by questions concerning ``rogue waves''
and other actual physical phenomena, quite a few such solutions have been constructed to date by several authors, and in several
different ways.  

In this note, we will produce several examples of  regular multi-lump rational solutions of KP1 by 
using the constructions from \cite{ACL}.    Our first example will be a three-lump solution.  A similar
computation yields a six-lump solution.  In both of these, outside a central range of $t$-values, the lumps appear in triangular
configurations and undergo an interesting interaction in that central $t$-range.  We conjecture that the constructions used for these will generalize to give 
triangular $N(N+1)/2$-lump solutions for all $N\ge 1$.   Finally, we will present a somewhat
similar five-lump solution obtained by the same methods.  

From the results of \cite{ACL}, we have
a precise recipe for producing a point of the Sato Grassmannian
corresponding to a cuspidal curve.  We also know  how the 
corresponding $\tau$-function for the KP solution is related to the polynomial
analog of the theta-function for the cuspidal curve.  To be clear, we note that the results of \cite{ACL} are
geared toward producing solutions of the KP2 equation.  Hence we must apply the 
complex rescaling $y \mapsto i\cdot y$ to get a KP1 solution.  We then conclude
by finding a $\tau$-function, hence a solution $u(x,y,t)$,  that is real for all real $(x,y,t)$.
We also show that our solution is regular for all real $(x,y,t)$ by expressing the $\tau$-function
as a sum of squares of real polynomials with a nonzero constant term. 

We will see that the $\tau$-function be computed directly, or by using the idea of degenerating a nodal curve to 
a cuspidal curve  (hence degenerating a soliton solution to a rational function solution) in a 
particular well-chosen way.  In this connection, we begin by mentioning the early work of Ablowitz and Satsuma, \cite{AS,SA},
which showed how to produce regular rational solutions from solitons by what they called
the ``long-wave limit'' process.  In our terms, it can be seen easily that their method amounts to 
taking the theta-function from an irreducible rational nodal curve and determining what happens when the
nodes degenerate to \emph{ordinary cusps} (i.e. singular points analytically isomorphic to the origin on 
$y^2 - x^3 = 0$, or so-called $A_2$ double points).  In our terms, the nodes come by identifying pairs of points $\{b,c\}$ on the 
normalization.  We always assume our curves are irreducible and rational, so this is just  the complex 
projective line $\mathbb{P}^1$ (i.e. the Riemann sphere).  Then Ablowitz and 
Satsuma's construction amounts to taking a limit as the pairs $\{b,c\}$ coalesce to single points.   Special
choices must then be made to ensure that the limit of the soliton solution is regular, essentially
by making the limiting polynomial $\tau$-function expressible as a \emph{sum of  squares} of 
real polynomials with a positive nonzero constant term.
Those conditions have also been realized by using other methods to find the limiting solutions,
most notably the Gram matrix techniques used by Chakravarty and Zowada in \cite{CZ1,CZ2}, and 
the perturbation processes described by Zhang, et al. in \cite{Zetal}.

In our three-lump example, the soliton solution
comes from the theta-function of an irreducible nodal curve of arithmetic genus $g=4$, obtained by identifying
\emph{four} pairs of points $\{b_i,c_i\}$, $i = 1,\cdots,4$,  on $\mathbb{P}^1$.  By letting
$\{b_1,c_1,b_2,c_2\}$ coalesce in a particular way, we produce a singular point
with semigroup $\langle 2,5\rangle$ (an $A_4$ double point).  Simultaneously, 
$\{b_3,c_3,b_4,c_4\}$ coalesce to a second, distinct but analytically equivalent singular point,
also with semigroup $\langle 2,5\rangle$.  From \cite{ACL}, we know that the total
degree of the polynomial theta-function will be $6 = 3 + 3$ in this case because
the Young diagram for each of the singular points is the triangular diagram
corresponding to the partition $(2,1)$.   The connection with these particular Young diagrams and 
the necessity of degenerating to a curve with \emph{two} cuspidal singular points
was suggested by the results of \cite{CZ1,CZ2}.  

 Throughout 
this work, we will report results whose derivations made rather heavy use of symbolic 
computation.  We used  the {\tt Maple 2024} computer algebra system, \cite{Ma}, to work with some of the
rather complicated formulas involved.  

\vskip 10pt
{\sc Acknowledgments.}  This work is essentially a continuation of \cite{ACL} and I would like to thank 
Daniele Agostini and T\"urk\"u \"Ozl\"um \c Celik for a number of helpful conversations as this was developing.  
I would also like to thank Bernd Sturmfels once again for facilitating  contact with Daniele and T\"urk\"u  starting in 
2020 and for renewing my interest in algebraic curves and applications  to PDE.

\section{Background on cuspidal singular curves and the polynomial analogs of theta-functions}

We begin by recalling several results from \cite{ACL} that will be the basis for a direct method for 
constructing lump solutions.   See the references there for general background on singular curves.  

Let $C$ be a reduced, irreducible rational curve of arithmetic genus $n$, 
Let $\omega_C$ be the sheaf of dualizing differentials (an invertible sheaf if $C$ is Gorenstein).
The global sections of $\omega_C$ can be described as follows.  Consider the 
normalization $\nu : \P^1 \to C$.  The elements of $H^0(C,\omega_C)$ are 
 meromorphic differentials $\omega = g(v)\ dv$ on $\P^1$ with 
\begin{equation}
\label{RosenlichtDiff}
\sum_{q \in \nu^{-1}(p)} {\rm Res}_q(\nu^*(f)\cdot \omega) = 0
\end{equation}
for all $p \in C$ and all $f \in {\mathcal O}_p$.
If $p$ is unibranch, and $q = \nu^{-1}(p)$ then we must have ${\rm Res}_q(\omega) = 0$.  

The generalized Jacobian of $C$ is defined as the quotient $J(C) = H^0(C,\omega_C)^*/H_1(C,\mathbb{Z})$.
Let $C_0 \subset C$ be the  smooth locus.  Fix $P_0\in C_0$.
The \emph{Abel mapping} with basepoint $P_0$ is   $\alpha : C_0 \to J(C)$ defined by $P \mapsto (\omega \mapsto \int_{P_0}^P \omega)$.
This extends by linearity to a mapping on effective divisors of all degrees $d$:  $\alpha^{(d)} : C_0^{(d)} \to J(C)$.
If (arithmetic) genus $g(C) = n$, the subvariety $W_{n-1}$, that is, the closure of the 
image of $\alpha^{(n-1)}$, will be called the \emph{theta-divisor} of $C$ and we will consider an implicit equation
for the theta-divisor as an analog of a \emph{theta-function} for $C$.
The generalized Jacobian $J(C) \cong \C^n$ and the theta-divisor (considered as a hypersurface in $\C^n$) is defined
by a polynomial equation if and only if $C$ is rational and all singular points of $C$ are ``cuspidal'' (that is, analytically irreducible, or unibranch).

In \cite{ACL}, a number of general facts concerning the form of the polynomial analogs of theta-functions were derived for cuspidal curves.  
These depend on two different invariants of the cuspidal singularities.  The first of these is the \emph{semigroup} of the singular
point $P$, the set of non-negative integers that are vanishing orders of  elements of the local ring ${\mathcal O}_P$ at $P$.  This a 
subsemigroup of $\mathbb{N}_{\ge 0}$ under addition, and its complement in $\mathbb{N}_{\ge 0}$ is finite.  

\begin{thm} [\cite{ACL}, Theorem 1.1]
\label{cuspidal}
If $C$ is Gorenstein, rational, of genus $n$ with only cuspidal singularities,
the degree of the polynomial analog of the theta-function giving an implicit equation of $W_{n-1}$ is $\le n(n+1)/2$.  
The bound is attained for curves with a single singular
point with semigroup $\langle 2, 2n + 1\rangle$.
\end{thm}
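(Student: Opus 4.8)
The plan is to make the identification $J(C)\cong\C^n$ and the Abel mapping completely explicit, extract from it a weighting of the coordinates under which the theta-polynomial is quasi-homogeneous, and then reduce the bound to an elementary estimate about numerical semigroups. Write the singular points of $C$ as $P_1,\dots,P_k$, with semigroups $S_1,\dots,S_k$ and $\delta$-invariants $\delta_1,\dots,\delta_k$; since $C$ is rational of arithmetic genus $n$ we have $\sum_i\delta_i=n$. For each $i$ let $1=l^{(i)}_1<\cdots<l^{(i)}_{\delta_i}$ be the gaps of $S_i$ and let $\lambda^{(i)}$ be the partition defined by $\lambda^{(i)}_j=l^{(i)}_{\delta_i+1-j}-(\delta_i-j)$, so that $|\lambda^{(i)}|=\sum_j l^{(i)}_j-\binom{\delta_i}{2}$ counts the boxes of the Young diagram attached to $P_i$.

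First I would pin down $H^0(C,\omega_C)$ from the Rosenlicht conditions \eqref{RosenlichtDiff}: for each cusp $P_i$, with $q_i=\nu^{-1}(P_i)$, and each gap $l^{(i)}_j$, the differential $(v-q_i)^{-l^{(i)}_j-1}\,dv$ has zero residue at $q_i$, satisfies the higher Rosenlicht conditions (precisely because $l^{(i)}_j\notin S_i$), and is holomorphic elsewhere on $\P^1$ (using that $l^{(i)}_j\ge1$); these $n$ differentials span $H^0(C,\omega_C)$. Integrating from a smooth basepoint, the associated coordinate $z^{(i)}_j$ on $J(C)\cong\C^n$ is a scalar multiple of $(v-q_i)^{-l^{(i)}_j}$. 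Hence the Abel image of $C_0$ is an explicit rational curve $\Gamma\subset\C^n$, and $W_{n-1}$ is the $(n-1)$-fold sum $\Gamma+\cdots+\Gamma$ in the vector group, i.e.\ the image of the addition map $C_0^{(n-1)}\to\C^n$. Assigning weight $l^{(i)}_j$ to $z^{(i)}_j$ grades $\C[z^{(1)}_1,\dots,z^{(k)}_{\delta_k}]$; the step I would then carry out is to show that in these coordinates the irreducible polynomial $\Theta$ cutting out the hypersurface $W_{n-1}$ is quasi-homogeneous of weighted degree $D=\sum_i|\lambda^{(i)}|$. Granting this, since $1$ is a gap of every $S_i$ every weight is $\ge1$, and therefore $\deg\Theta\le D$.

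Establishing the quasi-homogeneity and the value of $D$ is the step I expect to be the main obstacle. For a curve with a \emph{single} cusp it is clean: the $\C^*$-action on $\P^1$ fixing the cusp and the basepoint scales $z_j$ with weight $l_j$, $W_{n-1}$ is invariant, so $\Theta$ is quasi-homogeneous, and its weighted degree follows from a Hilbert-series count — the graded coordinate ring of $W_{n-1}$ is the subring of $\C[u_1,\dots,u_{n-1}]$ generated by the power sums $p_{l_1},\dots,p_{l_n}$, so its Hilbert series takes the form $(1-q^D)/\prod_j(1-q^{l_j})$, forcing $D=\sum_j l_j-\binom{n}{2}=|\lambda|$. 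With several cusps no single $\C^*$-action fixes all the $q_i$ at once, so one must instead work with the explicit form of $\Theta$ — its decomposition over the cusps into Schur-type building blocks, one per Young diagram, as developed in \cite{ACL} — or with a compatible multi-filtration, in order to conclude both that $\Theta$ is quasi-homogeneous in the stated grading and that its weighted degree is exactly (and not less than) $\sum_i|\lambda^{(i)}|$.

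Once $\deg\Theta\le D=\sum_i|\lambda^{(i)}|$ is in hand, the rest is elementary. The $j$-th gap of a numerical semigroup of genus $\delta$ satisfies $l_j\le 2j-1$: counting the gaps that are $\le l_j$ and using that for each $m$ with $0<m<l_j$ at least one of $m,\,l_j-m$ is itself a gap gives $j=\#\{\text{gaps}\le l_j\}\ge(l_j+1)/2$. Summing, $\sum_j l_j\le\sum_{j=1}^{\delta}(2j-1)=\delta^2$, with equality precisely when the gaps are $\{1,3,\dots,2\delta-1\}$, i.e.\ when $S=\langle 2,2\delta+1\rangle$. Hence $|\lambda^{(i)}|\le\delta_i^2-\binom{\delta_i}{2}=\binom{\delta_i+1}{2}$, and from $\sum_i\delta_i^2\le(\sum_i\delta_i)^2=n^2$ we get $D\le\sum_i\binom{\delta_i+1}{2}\le\binom{n+1}{2}$, with equality only for $k=1$. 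Finally, for attainment take $C$ with a single cusp of semigroup $\langle 2,2n+1\rangle$: then $\Gamma=\{(u,u^3,\dots,u^{2n-1})\}$, $W_{n-1}$ is the locus of odd power sums $(p_1,p_3,\dots,p_{2n-1})$ of $n-1$ variables, and $\Theta$ is — up to a scalar and the substitution $t_k=p_k/k$ — the Schur polynomial $s_{(n,n-1,\dots,1)}$; its top monomial $p_1^{\,n(n+1)/2}$ occurs with coefficient the number of standard Young tableaux of staircase shape, which is positive, so $\deg\Theta=D=n(n+1)/2$ in this case.
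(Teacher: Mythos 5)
First, a point of comparison: the paper does not prove this statement at all --- it is imported verbatim from \cite{ACL}, Theorem 1.1, so there is no in-paper proof to measure you against. The closest ingredient the paper does quote is Theorem~\ref{moredetail} (\cite{ACL}, Theorem 5.3), which identifies the highest-degree monomial of the theta-polynomial as $\prod_i (Z_1^{(i)})^{|\lambda(P_i)|}$, so that $\deg\theta=\sum_i|\lambda(P_i)|$. Your closing paragraph --- reducing $\sum_i|\lambda^{(i)}|\le n(n+1)/2$ to the elementary bound $l_j\le 2j-1$ on the $j$-th gap (with equality exactly for $\langle 2,2\delta+1\rangle$), together with $\sum_i\delta_i^2\le(\sum_i\delta_i)^2=n^2$ --- is correct and complete, and combined with Theorem~\ref{moredetail} it delivers both the bound and the attainment claim. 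So the skeleton of your argument is the right one, and the part you treat as ``elementary'' genuinely is.

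The genuine gap is precisely the step you flag as the main obstacle, and the repair you sketch for it cannot work as stated. For several cusps the theta-polynomial is \emph{not} quasi-homogeneous for the gap-weighting: the paper's own bicuspidal example \eqref{polytheta} contains both $Z_1^3Z_3^3$ and the linear terms $32256\,Z_1-5376\,Z_4$, so it is inhomogeneous for every positive weighting, and the paper explicitly remarks that the weighted homogeneity seen in the one-cusp monomial example is a special feature of the $\C^*$-action there. Thus the multi-cusp case cannot be settled by ``conclude that $\Theta$ is quasi-homogeneous in the stated grading''; what must actually be proved is the weaker but still nontrivial statement that the \emph{top-degree part} is the single monomial $\prod_i(Z_1^{(i)})^{|\lambda^{(i)}|}$ --- exactly the content of \cite{ACL}, Theorem 5.3, which you would have to supply (e.g.\ via the Schur-polynomial expansion of the tau-function over the Sato Grassmannian, or a genuine multi-filtration argument; your text gestures at both without carrying either out). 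A second, smaller inaccuracy: the explicit basis $(v-q_i)^{-l-1}\,dv$ of $H^0(C,\omega_C)$ is only valid for monomial cusps; for a general unibranch Gorenstein singularity an element of the local ring can have a nonzero Taylor coefficient at a gap order, so the basis differentials acquire lower-order correction terms (only the pole orders $l+1$ persist in general). Neither issue affects your single-cusp analysis or the concluding combinatorial estimate, but together they mean the multi-cusp upper bound --- the heart of the theorem --- is not yet proved in your write-up.
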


The complement of the semigroup is
known as the set of \emph{gaps} and the number of gaps is denoted by $\delta_P$.  When a curve has several singular points, 
its arithmetic genus is the sum $\sum_{P \in C \setminus C_0} \delta_P$.  

Since the set of gaps is finite, there will be a
minimal integer $d_P$ such that all $n \ge d_P$ are contained in the semigroup.  The Gorenstein condition is equivalent 
to the equation $d_P = 2\delta_P$ at all singular points.   We will only consider singularities satisfying this condition in this note.

If the gaps at $P$ are $w_1 < w_2 < \cdots < w_n$, then the second invariant of the singularity that we will use 
is the \emph{Weierstrass partition} $\lambda_P = (\lambda_1, \ldots, \lambda_n)$ associated to $P$.  The parts of $\lambda_P$ are defined by
\begin{equation}
\label{WP}
\lambda_i = w_{n+1 - i} - (n - i), \text{ for } 1 \le i \le n.
\end{equation}

\noindent
{\bf Example.}  
Consider the rational monomial curve $C$ parametrized by
\begin{align*}
\nu : \mathbb{P}^1 &\longrightarrow \mathbb{P}^3\\
(t:s) &\longmapsto (x_0 : x_1 : x_2 : x_3) = (t^6: t^2 s^4: t s^5 : s^6).
\end{align*}
$C$ is a curve of degree $6$ and arithmetic genus $4$ with implicit equations 
$$x_1 x_3 - x_2^2 = x_1^3 - x_0 x_3^2 = 0.$$
It has exactly one singular point $P= \nu(1:0) = (1:0:0:0)$.  
Using the affine coordinate $v = t/s$ on $\P^1$,  the singularity is at $v = \infty$ and $P_0$, defined by $v = 0$, is a smooth
point.   The local ring ${\mathcal O}_{P}$ generated by $(s/r)^{4}, (s/t)^{5}, (s/t)^{6}$, or $v^{-4},v^{-5},v^{-6}$. 
That singular point is clearly \emph{unibranch} with semigroup $= \langle 4,5,6\rangle$.  The gaps 
are $\{1,2,3,7\}$.  Note that $\delta = 4, d = 8$, so this is Gorenstein.
By \eqref{WP}, the partition corresponding to the singularity at $P$ is 
$\lambda = (4,1,1,1)$.
By \eqref{RosenlichtDiff}, a basis for $H^0(C,\omega_C)$:
is $\{ dv,\ v\ dv,\ v^2\ dv,\ v^6\ dv \}$.  The theta-divisor is parametrized by
\begin{align*}
Z_1 &= \int_0^{t_1} dv + \int_0^{t_2} dv + \int_0^{t_3} dv \\
Z_2 &= \int_0^{t_1} v\ dv + \int_0^{t_2} v\ dv  + \int_0^{t_3} v\ dv\\
Z_3 &= \int_0^{t_1} v^2\ dv + \int_0^{t_2} v^2\ dv  + \int_0^{t_3} v^2\ dv\\
Z_4 &= \int_0^{t_1} v^6\ dv  + \int_0^{t_2} v^6\ dv + \int_0^{t_3} v^6\ dv.
\end{align*}
These abelian integrals are also \emph{polynomials}, so the 3-fold parametrized above has a polynomial equation.
We can find it, for instance, using Gr\"obner bases.  Carrying out the integration, then eliminating $t_1,t_2,t_3$, 
we get the implicit equation
\begin{equation}
\label{4111theta}
Z_1^7+21Z_1^4Z_3-84Z_1^3Z_2^2+252Z_1 Z_3^2+252 Z_2^2 Z_3-252Z_4 = 0.
\end{equation}
This is the polynomial analog of the theta-function in this case
and the pattern described in Theorem~\ref{cuspidal}  above is clear since $7 < (4\cdot 5)/2 = 10$.  $\triangle$

\vskip 10pt
In general, the total degree of the polynomial analog of the theta-function for a Gorenstein cuspidal curve
and the form of its highest degree term are described as follows.

\begin{thm}[\cite{ACL}, Theorem 5.3]
\label{moredetail}
Let $C$ be a curve with several Gorenstein cuspidal singularities $P_1,\ldots, P_h$ with corresponding 
partitions $\lambda(P_1), \ldots, \lambda(P_h)$.  Take a basis of $H^0(C, \omega_C)$
 that is a union of the local bases at the $P_i$.  For each $i$, let  $Z_1^{(i)}$ correspond to the basis
 differential with minimal pole order at $P_i$.   Then the highest-degree monomial
in the polynomial analog of the theta-function is
$$(Z_1^{(1)})^{|\lambda(P_1)|} \cdots (Z_1^{(h)})^{|\lambda(P_h)|},$$
total degree is
$\deg(\theta) = \sum_{i=1}^h |\lambda(P_i)|$.  
\end{thm}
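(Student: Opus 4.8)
The plan is to reduce to the case of a single cusp, where the situation is governed by power sums and a genuine $\C^*$--action, and then to reassemble by a controlled degeneration. Write $g=\sum_{i=1}^h\delta_{P_i}$ for the arithmetic genus, so the theta--divisor is $W_{g-1}\subset J(C)\cong\C^g$, let $\theta$ be its (irreducible, squarefree) defining polynomial, and recall that $\alpha^{(g-1)}\colon C_0^{(g-1)}\to W_{g-1}$ is birational. Fix the normalization $\nu\colon\P^1\to C$ with affine coordinate $v$; after a projective change of coordinates put the basepoint at $v=0$ and the cusps $Q_i=\nu^{-1}(P_i)$ at finite points $v=a_i$. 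Since every singularity is unibranch, each element of $H^0(C,\omega_C)$ pulls back to a differential on $\P^1$ with all residues zero, hence of the form $dR$ for a rational function $R$; from \eqref{RosenlichtDiff} one reads off a decomposition $H^0(C,\omega_C)=\bigoplus_{i=1}^hV_i$, where $V_i$ is the $\delta_{P_i}$--dimensional space of Rosenlicht differentials whose only pole is at $Q_i$, with pole orders exactly $\{w+1 : w\text{ a gap of }P_i\}$. Choose a basis $\omega_j^{(i)}=dR_j^{(i)}$ of $V_i$ with $R_j^{(i)}=(v-a_i)^{-w_j^{(i)}}+(\text{lower-order poles at }a_i)$, where $w_1^{(i)}<\dots<w_{\delta_{P_i}}^{(i)}$ are the gaps of $P_i$; the union of these is a basis as in the statement, and since $1$ is always a gap at a cusp we have $w_1^{(i)}=1$, so $\omega_1^{(i)}$ has the least pole order in $V_i$ and $Z_1^{(i)}=\sum_k\bigl(R_1^{(i)}(t_k)-R_1^{(i)}(0)\bigr)$ is the corresponding Abel coordinate, exactly as in the statement.

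Now assign $Z_j^{(i)}$ the weight $w_j^{(i)}$. All weights are $\ge 1$, and weight $1$ is attained only by the variables $Z_1^{(i)}$, so every monomial of $\theta$ has ordinary degree at most its weight; hence $\deg\theta\le\mathcal W:=$ (top weight of a monomial of $\theta$), and once $\deg\theta=\mathcal W$ is established, the top--degree part of $\theta$ is the top--degree part of its top--weight homogeneous piece $\theta_{\mathcal W}$. So it suffices to identify $\theta_{\mathcal W}$. The single-cusp case is the key input: when $h=1$ we may place the cusp $P$ at $v=\infty$ and the basepoint at $v=0$, and then the pure monomials $\omega_j=v^{w_j-1}\,dv$ ($j=1,\dots,\delta_P$, with the $w_j$ the gaps) form a basis of $H^0(C,\omega_C)$, so $Z_j=\sum_k t_k^{w_j}$ is a power sum of the $\delta_P-1$ divisor points. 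The automorphism $v\mapsto\lambda v$ fixes both $P$ and the basepoint and multiplies $Z_j$ by $\lambda^{w_j}$, so $W_{\delta_P-1}$ is invariant and $\theta$ is weighted-homogeneous; its weight equals $|\lambda(P)|$ and the coefficient of $Z_1^{|\lambda(P)|}$ is nonzero. (Both facts come from the description of these polynomials via Schur polynomials in \cite{ACL}: in suitable coordinates $\theta$ becomes $s_{\lambda(P)}$, which is weighted-homogeneous of weight $|\lambda(P)|$ and restricts to a nonzero multiple of $Z_1^{|\lambda(P)|}$ when the other variables are set to zero; by \eqref{WP} one also has $|\lambda(P)|=\sum_j w_j-\binom{\delta_P}{2}$.) Since $Z_1$ is the only weight-$1$ variable, the top--degree monomial is $Z_1^{|\lambda(P)|}$; this proves the theorem for $h=1$, and recovers the bound of Theorem~\ref{cuspidal}, attained exactly for the staircase $\lambda(P)$, i.e.\ semigroup $\langle 2,2\delta_P+1\rangle$. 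Write $\theta^{(i)}_{\mathrm{sc}}$ for the polynomial obtained this way for the rational curve with the single cusp $P_i$, in the variables $Z_1^{(i)},\dots,Z_{\delta_{P_i}}^{(i)}$.

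To identify $\theta_{\mathcal W}$ for general $h$, homogenize $\theta$ with respect to the weight grading by adjoining a weight-$1$ coordinate $Z_0$; then the zero locus of $\theta_{\mathcal W}$ is the part of the closure of $W_{g-1}$ with $Z_0=0$, i.e.\ the set of limits of points of $W_{g-1}$ running off to infinity. Such limits are produced by letting subsets of the divisor points approach the cusps: if $m_i$ of the $t_k$ tend to $a_i$, then after rescaling each $Z_j^{(i)}$ by the appropriate power of the small parameter the limiting parametrization splits cusp by cusp, the block from $P_i$ being the power-sum parametrization of $m_i$ variables by the exponents $w_j^{(i)}$; this block is dominant onto $\C^{\delta_{P_i}}$ when $m_i\ge\delta_{P_i}$, cuts out a hypersurface when $m_i=\delta_{P_i}-1$, and has codimension $\ge2$ when $m_i<\delta_{P_i}-1$. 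Because $\sum_im_i=g-1=\sum_i\delta_{P_i}-1$, the only configurations with a codimension-one limit are the $h$ choices for which $m_{i_0}=\delta_{P_{i_0}}-1$ at one index and $m_i=\delta_{P_i}$ at all the others, and the corresponding limit is the pullback of the theta--divisor of the single-cusp curve at $P_{i_0}$, cut out by $\theta^{(i_0)}_{\mathrm{sc}}$. These $h$ hypersurfaces are pairwise distinct, since $\theta^{(i)}_{\mathrm{sc}}$ involves only the variables of the $i$-th cusp; hence, granting that each occurs in the limit with multiplicity one, $\theta_{\mathcal W}=c\,\prod_{i=1}^h\theta^{(i)}_{\mathrm{sc}}$ with $c\ne0$. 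Since $\prod_i\theta^{(i)}_{\mathrm{sc}}$ is weighted-homogeneous of weight $\sum_i|\lambda(P_i)|$, we get $\mathcal W=\sum_i|\lambda(P_i)|$; then $\deg\theta=\mathcal W=\sum_i|\lambda(P_i)|$ because $\theta_{\mathcal W}$ already contains a monomial of that degree, and the top--degree part of $\theta$ equals that of $\theta_{\mathcal W}=c\prod_i\theta^{(i)}_{\mathrm{sc}}$, namely $c\,\prod_{i=1}^h\bigl(Z_1^{(i)}\bigr)^{|\lambda(P_i)|}$, as asserted.

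The main obstacle is the degeneration step. One must make the rescaled limit precise and, above all, verify that each of the $h$ limiting hypersurfaces really occurs with multiplicity one — the theorem would fail if some $\theta^{(i)}_{\mathrm{sc}}$ entered $\theta_{\mathcal W}$ to a higher power — despite the lower-order polar corrections concealed in the differentials $R_j^{(i)}$ at the cusps. The companion ingredient, the single-cusp weighted-homogeneity together with the nonvanishing of the $Z_1^{|\lambda(P)|}$ coefficient, is the other place where genuine work is needed; the remainder is bookkeeping with Rosenlicht differentials and with symmetric-function identities.
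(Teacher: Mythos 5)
First, a caveat about the ground truth here: the paper does not prove this statement at all --- it is quoted as Theorem 5.3 of \cite{ACL}, and the only internal evidence about how that proof goes is the remark following \eqref{rearranged} that the leading part of the bicuspidal theta-polynomial factors into the theta-polynomials of the partial normalizations in which one cusp is retained and the others are smoothed. Your strategy is consistent with that remark: you reduce to the single-cusp (Schur-polynomial) case and then argue that the top-weight part $\theta_{\mathcal{W}}$ is the product $\prod_i \theta^{(i)}_{\mathrm{sc}}$. Several of your intermediate steps are correct and would survive a full write-up: the decomposition $H^0(C,\omega_C)=\bigoplus_i V_i$ into blocks with poles at a single cusp; the observation that the $Z_1^{(i)}$ are the only weight-one variables, so ordinary degree is bounded by weight and the top-degree part must sit inside $\theta_{\mathcal{W}}$; and the dimension count showing that only the $h$ configurations with $m_{i_0}=\delta_{P_{i_0}}-1$ and $m_i=\delta_{P_i}$ elsewhere can contribute codimension-one components at infinity.

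Nevertheless there is a genuine gap, which you flag yourself, at the decisive step. Your degeneration argument yields only the set-theoretic identity $V(\theta_{\mathcal{W}})=\bigcup_i V(\theta^{(i)}_{\mathrm{sc}})$, hence $\theta_{\mathcal{W}}=c\prod_i(\theta^{(i)}_{\mathrm{sc}})^{e_i}$ with undetermined exponents $e_i\ge 1$. The theorem is precisely the assertion that every $e_i=1$: if some $e_i\ge 2$, both the total degree and the leading monomial come out wrong, so this cannot be waved through as a technicality. Nothing in the proposal controls these multiplicities; one would need either an independent determination of $\deg\theta$ (equivalently of the top weight $\mathcal{W}$), for instance via the degree of $\alpha^{(g-1)}$ and an intersection count on $J(C)$, or a local transversality/reducedness analysis of the degenerating family of divisors along each limiting hypersurface --- and the lower-order polar corrections in the $R_j^{(i)}$ make that nontrivial. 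A secondary soft spot is the single-cusp input: the nonvanishing of the coefficient of $Z_1^{|\lambda(P)|}$ is imported from the Schur-polynomial description in \cite{ACL,BLE}, which is a legitimate (logically prior) citation, but the identification of $\theta$ with $s_{\lambda(P)}$ uses only the gap-indexed power sums, so the elimination of the non-gap power sums needs to be spelled out before the coefficient claim is airtight. As it stands the proposal is a plausible outline matching the architecture of the cited proof, not yet a proof.
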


In the foregoing example, note $\lambda = (4,1,1,1)$.  Hence $| \lambda | = 7$ and the highest degree term
in \eqref{4111theta} is $Z_1^7$.  The polynomial there is also homogeneous for the weighting 
$${\rm wt}(Z_1) = 1, {\rm wt}(Z_2) = 2, {\rm wt}(Z_3) = 3, {\rm wt}(Z_4) = 7.$$
These weights are the gaps of the semigroup $\langle 4,5,6\rangle$.  This homogeneity is a 
special feature coming from the $\mathbb{C}^\ast$ torus action on the monomial curve.

\section{From the theta-function to a KP tau-function}

There is a close connection between the polynomial analogs of theta-functions for cuspidal curves as developed in
the previous section and tau-functions for the KP hierarchy.  The underlying reason for this is that the 
Krichever construction producing a point of the Sato Grassmannian from data connected to an algebraic
curve also applies in this situation of cuspidal singular curves.  See \cite{ACL}, Proposition 3.1 for the exact 
statement.   In particular, there is 
a frame for the subspace corresponding to the Grassmannian point where the columns at the right of the frame matrix
come from the series expansions of the differentials in a basis of $H^0(C,\omega_C)$.  This yields the 
following statement.

\begin{thm}[(ACL), Theorem 3.5, Theorem 4.11]
\label{thetatotau}
Let $C$ be a Gorenstein cuspidal curve of arithmetic genus $g$.  Let $\omega_j = \sum_{i=0}^\infty a_{ij} u^i$ be
the local expansions of the differentials in a basis of $H^0(C, \omega_C)$ at the smooth point $P_0$, 
and let $A$ be the $g \times \infty$ transposed matrix $A = (a_{ji})$.  Let ${\bf t} = (t_1,t_2, \ldots\ )$
be an infinite collection of indeterminates.  Then a KP tau-function corresponding to the 
point of the Sato Grassmannian obtained from $C$ by the Krichever construction is given by 
$$\tau({\bf t}) = e^{\ell({\bf t})} \theta(A{\bf t} + {\bf b}),$$
where $\ell$ is a linear function of ${\bf t}$ and ${\bf b}$ is a constant vector.
\end{thm}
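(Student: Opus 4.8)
The plan is to recall the shape of the argument from \cite{ACL} (this is Theorem 3.5 together with Theorem 4.11 there), since the construction underlies everything that follows: one runs the Krichever construction to produce the Grassmannian point attached to $C$, and then computes the Sato $\tau$-function of that point directly from an explicit frame. First I would invoke \cite{ACL}, Proposition 3.1, which attaches to the data $(C,P_0,u)$ a point $W$ of the Sato Grassmannian together with an explicit frame -- a basis of $W$ adapted to the order filtration on $\C((u))$ -- in which the columns of lowest order are exactly the Laurent expansions at $P_0$ of $\omega_j/du$ for $\omega_j = \sum_{i\ge 0} a_{ij}u^i\,du$ a basis of $H^0(C,\omega_C)$, while the remaining columns are the trivial monomials $u^{-k}$ for $k$ large. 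Because $C$ is Gorenstein cuspidal there are only finitely many gaps at each singular point (as in Theorem~\ref{cuspidal} and the surrounding discussion), so $W$ differs from the vacuum subspace in only finitely many rows; this finiteness is what makes the rest of the computation finite-dimensional.

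Next I would write down the Sato formula for the $\tau$-function of $W$: with $g(\mathbf{t}) = \exp\bigl(\sum_{i\ge 1}t_i u^{-i}\bigr)$ one has $\tau_W(\mathbf{t}) = \det\bigl(\pi_+\circ g(\mathbf{t})|_W\bigr)$, the determinant of the projection of the translated subspace $g(\mathbf{t})\cdot W$ onto the nonnegative part of $\C((u))$. Using the explicit frame, this infinite determinant collapses -- by a Laplace expansion along the finitely many nontrivial columns -- to a $g\times g$ determinant whose entries are elementary Schur polynomials in the $t_i$ with coefficients built from the $a_{ij}$. I would then recognize that finite determinant as the defining equation of the theta-divisor $W_{g-1}\subset J(C)\cong\C^g$, which by the discussion preceding the theorem is precisely the polynomial $\theta$: the KP flow $t_i$ moves the Grassmannian point (equivalently, the associated rank-one sheaf) along $J(C)$ by the constant translation vector whose coordinates are the expansion coefficients $a_{ki}$ of the $\omega_k$ at $P_0$, i.e. the rows of $A=(a_{ji})$, so the determinant equals $\theta$ evaluated at $A\mathbf{t}+\mathbf{b}$, with $\mathbf{b}\in\C^g$ the constant Abel image of the fixed geometric starting data. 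This is the analog, for the affine group $J(C)=\C^g$, of Riemann's vanishing theorem, and it is exactly what the earlier sections of \cite{ACL} establish.

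Finally I would account for the exponential prefactor. In the smooth-curve Krichever construction this prefactor is $e^{Q(\mathbf{t})}$ with $Q$ a quadratic form coming from the bilinear term in the expansion at $P_0$ of the canonical second-kind differential; the point specific to the cuspidal case, which is the content of \cite{ACL}, Theorem 4.11, is that after the natural normalization of the local coordinate all such contributions of pole order $\ge 2$ drop out, so that the exponent is at most linear. Equivalently, the cocycle by which $\tau_W$ fails to be literally translation-invariant along $J(C)=\C^g$ is, in the cuspidal case, the exponential of a linear function $\ell(\mathbf{t})$. Collecting the pieces yields $\tau(\mathbf{t}) = e^{\ell(\mathbf{t})}\,\theta(A\mathbf{t}+\mathbf{b})$.

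The delicate point -- and the one I expect to take real work -- is the identification of the collapsed finite determinant with the polynomial $\theta$ defining $W_{g-1}$, together with the claim that its Schur-polynomial expansion terminates so that $\tau$ is genuinely a polynomial in the $t_i$ up to the exponential. Both rest on the Gorenstein cuspidal hypothesis through finiteness of the gap set, and on matching, on the nose, the basis of $H^0(C,\omega_C)$ used to build the frame with the one used to parametrize $W_{g-1}$ via the Abel map. Keeping those normalizations synchronized, and tracking exactly which constant $\mathbf{b}$ and which linear $\ell$ emerge, is where the bookkeeping lies; it is carried out in detail in \cite{ACL}, which is why here one simply cites Theorems 3.5 and 4.11.
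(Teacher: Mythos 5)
Your proposal is correct and follows essentially the same route the paper indicates: the paper offers no independent proof of this statement, deferring entirely to \cite{ACL} (Proposition 3.1 for the Grassmannian point and frame, Theorems 3.5 and 4.11 for the $\tau$-function formula), and its only stated justification is exactly the point you lead with, namely that the columns of the frame come from the series expansions of the dualizing differentials at $P_0$. Your additional sketch of the Sato determinant collapsing to a finite determinant identified with the theta-divisor equation, and of the exponential prefactor degenerating from quadratic to linear in the cuspidal case, is a faithful reconstruction of the argument carried out in \cite{ACL} and is consistent with everything the paper says.
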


\section{The three-lump example}

Consider a cuspidal singular curve with \emph{two} cusps with semigroup $\langle 2,5\rangle$ at the points $z = \pm 1$ in 
one affine coordinate on $\P^1$.  
By the constructions from \cite{ACL}, the $\langle 2,5\rangle$-cusps correspond to the triangular Young
diagrams from the partition $\lambda = (2,1)$.  Moreover, by Theorem~\ref{moredetail} the total degree of the polynomial analog
of the theta-function will be $3 + 3 = 6$ in this case.  We take a basis of $H^0(C,\omega_C)$ as follows
\begin{align}\label{dualizingdiffs}
\omega_1  &= \frac{4}{(u - 1)^2}\ du \nonumber \\
\omega_2  &= \frac{-12 u^2}{(u - 1)^4}\ du\\ 
\omega_3  &= \frac{4}{(u + 1)^2}\ du \nonumber\\
\omega_4  &= \frac{-12 u^2}{(u + 1)^4}\ du. \nonumber
\end{align}
(The constant factors $4,-12$ are not important here; they resulted from a different calculation to be described later.)

We can find an implicit equation of the 
$W_3$ subvariety of the generalized Jacobian of the cuspidal curve starting from the 
usual abelian integral parametrization.  We take the base point of the abelian integrals as the 
point $z = 0$ or $u = \infty$ on $\mathbb{P}^1$, the normalization of the cuspidal curve:
\begin{equation} \label{thetaparam}
Z_j = \int_\infty^{t_1} \omega_j + \int_\infty^{t_2} \omega_j + \int_\infty^{t_3} \omega_j, \quad j = 1,\ldots, 4.
\end{equation}
Eliminating the $t_i$  via a Gr\"obner basis calculation yields an implicit equation involving a polynomial 
analog of the theta-function:
\begin{align}\label{polytheta}
&Z_1^3  Z_3 ^3 + 24 Z_1^3 Z_3^2 - 24 Z_1^2 Z_3^3 + 192 Z_1^3 Z_3  + 16 Z_1^3 Z_4  - 540 Z_1^2 Z_3 ^2 + 192 Z_1  Z_3^3  \\
&+ 16 Z_2 Z_3^3 + 336 Z_1^3 - 4032 Z_1^2 Z_3 - 384 Z_1^2 Z_4 + 4032 Z_1 Z_3^2 + 384 Z_2 Z_3^2 - 336 Z_3^3 - 5904 Z_1^2\nonumber \\
& + 27936 Z_1 Z_3 + 3072 Z_1 Z_4 + 3072 Z_2 Z_3 + 256 Z_2 Z_4 - 5904 Z_3^2 + 32256 Z_1 + 5376 Z_2 \nonumber\\
&- 32256 Z_3 - 5376 Z_4 = 0. \nonumber
\end{align}
As expected by Theorem~\ref{moredetail}, the highest-degree term is the $Z_1^3 Z_3^3$ of total degree $6$.  

From Theorem~\ref{thetatotau} (Theorem 4.11 of \cite{ACL}), to produce a KP tau-function from this theta-function (up to 
an exponential factor that does not contribute anything when we apply \eqref{KPu} to produce the actual
KP1 solution $u(x,y,t)$), we need to 
compute a frame for the point of the Sato Grassmannian corresponding to the cuspidal curve.  In fact, 
we have already done the relevant computations needed here, since the crucial part of this comes
from the series expansions of the dualizing differentials from \eqref{dualizingdiffs} above.  We have
\begin{align}\label{framecoeffs}
\omega_1 &= (4 + 8u + 12 u^2 + 16 u^3 + \cdots\ )\ du \nonumber \\
\omega_2 &= (-12 u^2 - 48 u^3  + \cdots\ )\ du \\
\omega_3 &= (4 - 8u + 12u^2 - 16 u^3 + \cdots\ )\ du \nonumber\\
\omega_4 &= (-12 u^2 + 48 u^3 + \cdots\ )\ du\nonumber
\end{align}
Hence, taking 
\begin{align}\label{tausub}
Z_1 &= 4x + 8i y + 12 t + \phi_1\nonumber\\
Z_2 &= -12 t + \phi_2\\
Z_3 &= 4x - 8iy + 12 t + \phi_3\nonumber\\
Z_4 &=-12 t + \phi_4\nonumber
\end{align}
and substituting into \eqref{polytheta}, we obtain what is essentially 
a $\tau$-function for KP1 solution (note that the complex rescaling of $y$
to $i\cdot y$ has been incorporated 
here).   The $\phi_j$ are arbitrary constant parameters (analogous to ``phase factors'' for soliton solutions).  
They must be chosen appropriately to obtain a 
regular real solution.  In fact, for ``most'' values of the $\phi_j$ the resulting KP1 solutions produced by 
this recipe will still be non-regular, and 
they will also take non-real values at some real $(x,y,t)$.  We will see how to overcome these difficulties in the next section.

\section{Finding a real regular solution}

We begin with a simple observation related to the form of \eqref{polytheta} and \eqref{tausub}.  As long as 
$x, t$ are taken to be real, the only possible terms contributing non-real values in the substituted 
theta-function are those containing odd powers of $y$--that is, $y, y^3$, and $y^5$--together with arbitrary powers of $x,t$ 
giving a total degree at most $6$.  The coefficient of $y^5$ yields the terms
$$(-24 \phi_1 + 24  \phi_3 + 384) i$$
Hence, if $\phi_1 - \phi_3 = 16$, the coefficient of $y^5$ will be zero.  When $\phi_1 = 16 + \phi_3$ is
substituted into the coefficient of $y^3$, a rather surprising amount of cancellation happens and the 
only remaining terms are 
$$(44 - 2\phi_4 + 2 \phi_2) i.$$
If also $\phi_2 - \phi_4 = -22$, then the coefficient of $y^3$ is zero.  Moreover, these choices also make the coefficient of $y$
equal to zero, so the KP1 solution from \eqref{KPu} takes only real values for real $(x,y,t)$.  
The ``phase factors'' $\phi_3, \phi_4$ are still arbitrary, so to produce an explicit solution, we take $\phi_3 = \phi_4 = 0$.
The following is essentially a $\tau$-function for the solution we are considering:
\begin{align}  \label{lumptau}
& 198 x + (3033/2) x^2 t + (6615/2) x^2 t^2 + (8883/2) x t^2 + 1494 x t + 2592 y^2 x t^2 + x^6 + 6561 x t^3  \\
& + 306 x^2 y^2 + 576 y^4 t + 2970 y^2 t^2 + 3240 x^2 t^3 + 180 x^4 t + 96 x^3 y^2 + 1080 x^3 t^2  \nonumber\\
& + 741 x^3 t + 2592 y^2 t^3 + 1710 t y^2 + 4860 t^4 x + 522 x y^2 + 192 x y^4 + 1458 x t^5 + 432 y^4 t^2 \nonumber \\
& + 972 y^2 t^4 + 135 x^4 t^2 + 540 x^3 t^3 + 18 x^5 t + 1215 x^2 t^4 + 48 x^2 y^4 + 12 x^4 y^2 + 864 x^2 y^2 t \nonumber\\
 &+ 144 x^3 t y^2 + 648 x^2 t^2 y^2 + 1908 x y^2 t + 1296 x y^2 t^3 + 288 x y^4 t + 729 t^6 + 64 y^6 + 405 y^2 \nonumber\\
 & + 2916 t^5 + 228 y^4 + 12 x^5 + 261 x^2 + 2142 t^2 + 513/8 + (8667/2) t^3 + (345/2) x^3 \nonumber\\
 & + (1125/2) t + (249/4) x^4 + (19521/4) t^4. \nonumber
 \end{align}
(We say ``essentially'' because as always the actual $\tau$-function also includes an exponential factor that is linear in $x$, hence
does not contribute when \eqref{KPu} is applied.)

\begin{thm}  
The KP1 solution from the polynomial in \eqref{lumptau} is real and regular for all real $(x,y,t)$.
\end{thm}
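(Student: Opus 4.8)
The plan is to verify the claim in two parts: first that the KP1 solution $u(x,y,t)$ obtained from \eqref{lumptau} via \eqref{KPu} is real-valued for all real $(x,y,t)$, and second that it is regular (nonsingular) there. The reality is essentially already done in the discussion preceding the statement: with the choices $\phi_1 - \phi_3 = 16$, $\phi_2 - \phi_4 = -22$, and $\phi_3 = \phi_4 = 0$, the substituted theta-function \eqref{polytheta} has all its imaginary part cancelled, because the coefficients of $y$, $y^3$, and $y^5$ (the only monomials that can carry a factor of $i$ after the substitution \eqref{tausub}) vanish identically. So one simply records that \eqref{lumptau} is a polynomial with \emph{real} coefficients in the real variables $x,y,t$; since $\ln$ of a positive real quantity is real and $\partial_x^2$ preserves reality, $u(x,y,t)$ is real wherever $\tau$ does not vanish.

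For regularity, the strategy indicated in the introduction is to exhibit the polynomial in \eqref{lumptau} as a \emph{sum of squares of real polynomials plus a strictly positive constant}. If $\tau = c + \sum_k p_k(x,y,t)^2$ with $c > 0$ and $p_k \in \mathbb{R}[x,y,t]$, then $\tau \ge c > 0$ everywhere on $\mathbb{R}^3$, so the denominator of $u$ never vanishes and $u$ is a smooth (indeed real-analytic) function on all of $\mathbb{R}^3$. Concretely I would search, by undetermined coefficients or a semidefinite-programming/Maple computation, for such a representation; the constant term $513/8 > 0$ is consistent with a representation of the form $\tau = 513/8 + \sum_k p_k^2$. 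One expects the $p_k$ to be low-degree polynomials (degree $3$ in $x$ and lower in $y,t$, matching the fact that $\tau$ has top term $x^6$ and is degree $6$ in the space variables), reflecting the two triangular $(2,1)$ Young-diagram structures at the two cusps; a natural guess is a handful of cubics whose squares reproduce the degree-$6$ part $x^6 + \cdots$ while the cross terms and lower-order squares fill in the rest, leaving exactly $513/8$.

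The main obstacle is finding the explicit sum-of-squares certificate: the polynomial in \eqref{lumptau} has many terms with awkward rational coefficients, so the bookkeeping in matching squares is delicate, and one must confirm that the Gram matrix of the quadratic form (in the monomial basis of polynomials up to the appropriate degree) is genuinely positive semidefinite with the residual constant coming out positive. This is exactly the step where symbolic computation is essential, and where the paper's phrasing (``we show that our solution is regular... by expressing the $\tau$-function as a sum of squares of real polynomials with a nonzero constant term'') tells us the answer exists. Once the SOS decomposition is in hand, the conclusion is immediate: $\tau > 0$ on $\mathbb{R}^3$ forces $u = 2\,\partial_x^2 \ln \tau$ to be everywhere real and smooth, and the decay-to-$0$ behavior follows because $\tau$ is a polynomial of even degree with positive leading behavior, so $\ln\tau$ grows only logarithmically and its second derivative decays. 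I would close by remarking that the same polynomial can be obtained as a long-wave limit of the genus-$4$ nodal soliton $\tau$-function, which gives an independent check on \eqref{lumptau}.
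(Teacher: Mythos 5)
Your overall strategy matches the paper's: reality follows from the vanishing of the coefficients of $y$, $y^3$, $y^5$ forced by the choices $\phi_1-\phi_3=16$, $\phi_2-\phi_4=-22$, and regularity is to follow from strict positivity of the polynomial \eqref{lumptau} established by a sum-of-squares certificate. The gap is that you never produce the certificate: you defer it to an undetermined-coefficients or semidefinite-programming search and explicitly call this ``the main obstacle.'' That step is the entire content of the paper's proof of regularity, and it is not automatic --- a nonnegative polynomial in three variables of degree six need not be a sum of squares at all, so the existence of the representation you are searching for cannot be taken for granted, and appealing to the paper's own phrasing that the answer exists is circular in a blind proof. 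The paper does not find the certificate by brute force; it exhibits the structured rearrangement \eqref{rearranged} of the theta polynomial \eqref{polytheta}, namely
$(Z_1^3 - 24 Z_1^2 + 192 Z_1 + 16 Z_2 - 336)(Z_3^3 + 24 Z_3^2 + 192 Z_3 + 16 Z_4 + 336) + 36\,(Z_1 Z_3 + 10 Z_1 - 6 Z_3 - 56)(Z_1 Z_3 + 6 Z_1 - 10 Z_3 - 56)$,
where the two cubic factors are (translates of) the theta polynomials of the two partial normalizations of the bicuspidal curve. Under the substitution \eqref{tausub} these cubics become complex conjugates, so their product is $A^2+B^2$ with $A,B$ real, while the remaining term becomes $36\bigl((4x+12t+10)^2+64y^2+4\bigr)\bigl((4x+12t+6)^2+64y^2+4\bigr) \ge 36\cdot 16 > 0$. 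This geometric origin of the decomposition is the missing idea in your write-up.

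A secondary caution: as stated, ``$\tau$ is a sum of squares of real polynomials with a nonzero constant term'' does not by itself yield $\tau>0$ everywhere (e.g.\ $(x-1)^2$ has constant term $1$ but vanishes at $x=1$). You do guard against this by asking for the specific form $\tau = c + \sum_k p_k^2$ with $c>0$ constant, which is fine, but note that the paper's certificate is not literally of that form either; its strict positivity comes from the second summand being bounded below by a positive constant while the first is nonnegative. Whatever certificate your search returns, you must check that it actually forces a uniform positive lower bound, not merely nonnegativity.
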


\begin{proof}  That the $u(x,y,t)$ produced by \eqref{KPu} takes only real values for real $(x,y,t)$ is a consequence
of the determination of the $\phi_j$ described above and is also clearly visible from the form of \eqref{lumptau}.  
To show that this is a regular solution (a ``multi-lump'') we will show that the polynomial in \eqref{lumptau}
is a sum of squares of real polynomials with a nonzero constant term.  This will complete the proof.  To begin, 
we note that the polynomial analog of the theta-function from \eqref{polytheta} above can actually be 
rewritten in the form
\begin{align}\label{rearranged}
&(Z_1^3 - 24 Z_1^2 + 192 Z_1 + 16 Z_2 - 336)(Z_3^3 + 24 Z_3^2 + 192 Z_3 + 16 Z_4 + 336) \\
&+ 36 (Z_1 Z_3 + 10 Z_1 - 6 Z_3 - 56) (Z_1 Z_3 + 6 Z_1 - 10 Z_3 - 56). \nonumber
\end{align}
Part of this reflects the general patterns determined in the proof of Theorem 5.3 of \cite{ACL}.  The two factors of degree 3 on the first line are
actually the implicit equations of certain translates of the theta-divisors for the two partial normalizations of the bicuspidal curve,
where one of the cusps is smoothed and the other one remains untouched.  The other term is of lower total degree 4 and it happens
to factor in this way in this case.  When the values from \eqref{tausub} with $\phi_j$ as above are substituted into this polynomial, the two factors
of total degree 3 in the term on the first line become complex conjugates, and the product has the form 
$(A + iB)(A - iB) = A^2 + B^2$, where $A,B$ are polynomials with real coefficients.  The factors on the second line yield 
$$36((4 x + 12 t + 10)^2 + 64 y^2 + 4) ((4 x + 12 t + 6)^2 + 64 y^2 + 4)$$
which is also a sum of squares of real polynomials.  It is easy to see that the value of the whole polynomial when $x = y = t = 0$ is a strictly positive constant.
Hence the corresponding KP1 solution is regular for all real $(x,y,t)$.  It decays to zero as $(x^2 + y^2)^{-2}$ as $x^2 + y^2 \to \infty$.  
\end{proof}

We include some numerically-generated plots of the solution given by \eqref{lumptau} to demonstrate how it evolves over time.  

\begin{figure}
\centering
\includegraphics[width=3in,height=3in]{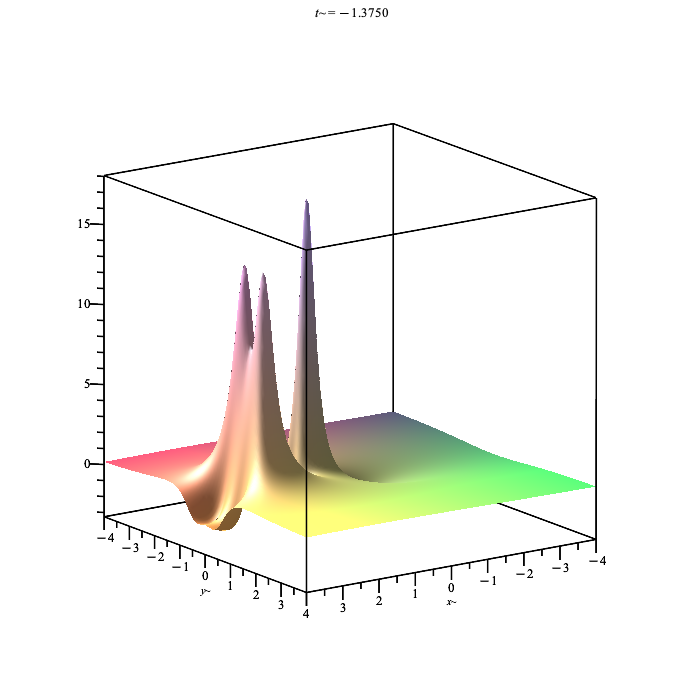}
\caption{The solution \eqref{lumptau} at $t = -1.375$.}
\end{figure}

\begin{figure}
\centering
\includegraphics[width=3in,height=3in]{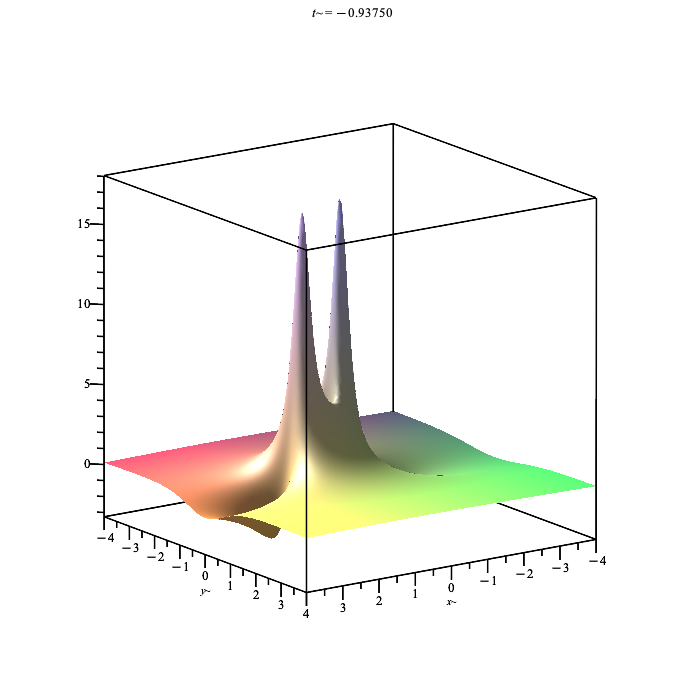}
\caption{The solution \eqref{lumptau} at $t = -0.9375$.}
\end{figure}

\begin{figure}
\centering
\includegraphics[width=3in,height=3in]{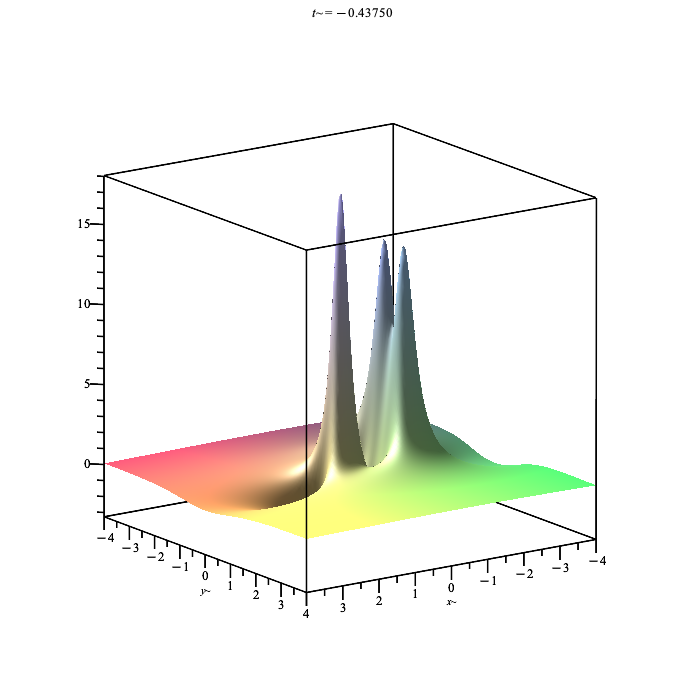}
\caption{The solution \eqref{lumptau} at $t = -0.4375$.}
\end{figure}

From Figures 1 and 3, we see that there are three local maxima (the ``lumps'') contained in the graph of $u(x,y,t)$ for $t$ outside a central region.  In Figure 2, however, 
we see that the lumps are undergoing an interesting interaction where two lumps have apparently coalesced and exchanged form with the taller single lump.
This means that our solution is \emph{not} the same as the traveling wave 3-lump solutions found in \cite{Zetal}.  The reason is that the authors
of \cite{Zetal} actually started from an analog of the Boussinesq equation in which dependence of $u$ on $t$ is omitted and a solution $v(x,y)$ of that equation is 
then used to generate a KP solution by setting $u(x,y,t) = v(d(x + Vt), y)$ where $d,V$ are constants and $V$ represents a wave speed.  Solutions like ours have been 
produced by other authors by different methods, though.  We refer the interested reader to the literature review and the bibliography of \cite{Zetal} for pointers to 
the relevant articles.

\section{Nodal curves and KP solitons}

To show how our method relates with other previous work producing multi-lump solutions, we will now describe 
an alternative way to derive the KP1 solution corresponding to \eqref{lumptau}.
We begin by setting up some suitable notation for understanding irreducible rational nodal curves and the corresponding soliton KP solutions.   
We follow notation from \cite{AFMS, FM}.
We start from $\mathbb{P}^1$ and identify $g$ pairs of points to produce an irreducible rational nodal
curve of arithmetic genus $g$.  In one of the standard coordinate charts of $\mathbb{P}^1$, say the points identified to give
the $i$th node are $z = b_i, c_i$.  Then a basis for the vector space of dualizing, or Rosenlicht, differentials on the nodal curve
is given by 
$$\omega_i = -\left(\frac{1}{z - b_i} - \frac{1}{z - c_i}\right)\ dz,$$
since the sum of the residues at $z=b_i$ and $z = c_i$ vanishes.  

For our purposes, it will be most convenient to change coordinates on $\mathbb{P}^1$, taking $u = \frac{1}{z}$ as the local 
coordinate at the point at infinity.  In terms of $u$, 
\begin{eqnarray} \label{nodaldiffs}
\omega_i &= {\displaystyle \left(\frac{1}{1/u - b_i} - \frac{1}{ 1/u - c_i}\right)\cdot \frac{1}{u^2}\ du}\\
&= {\displaystyle \frac{b_i - c_i}{(1 - b_i u)(1 - c_i u)}\ du}.
\end{eqnarray}
In terms of the coordinate $u$, these differentials can be expanded via geometric series in the form 
\begin{equation} \label{udiff}
\omega_i = \left((b_i - c_i) + (b_i^2 - c_i^2) u + (b_i^3 - c_i^3) u^2 + (b_i^4 - c_i^4) u^3 + \cdots \right)\ du
\end{equation}

It is well-known (see, for instance, \cite{M, AFMS, FM, K})  that the Riemann theta-functions on the Jacobians of a family of smooth curves degenerating to one of 
these rational nodal curves have a limit of the form 
$$\theta(z_1,\ldots,z_g) = \sum_{m \in \{0,1\}^g} \exp 2\pi i\left(\sum_{1 \le i < j\le g} m_i m_j \Omega_{ij} + \sum_{i=1}^g m_i z_i\right)$$
for some constants $\Omega_{ij}$.  These are the limits of the off-diagonal terms in the period matrices of the smooth curves, while the 
diagonal terms do not enter in the limit.  
It is also possible to determine a shift vector $h = (h_1,\ldots,h_g)$ such that the analog of the theta-divisor, that is, the $W_{g-1}$ subvariety
of the generalized Jacobian of the rational nodal curve, is given by the equation 
$$\theta(z_1 - h_1, \ldots, z_g - h_g) = 0$$
(the vector $h$ is analogous to the vector of ``Riemann constants'' which plays the same role for the theta-function from a smooth
genus $g$ curve).  

Soliton solutions of KP1 are then derived from these theta-functions first by 
substituting
$$z_j = (b_j- c_j)x + (b_j^2 - c_j^2) i\cdot y + (b_j^3 - c_j^3) t + \phi_j,$$
to yield KP1 $\tau$-functions and then applying \eqref{KPu}.
The $\phi_j$ are arbitrary constant ``phase factors'' whose values are then determined to produce real regular solutions.

In the article \cite{Zetal}, Zhang, Yang, Li, Guo, and Stepanyants give examples of degenerations of the $b_i$ and $c_i$ depending on 
a parameter $\varepsilon \to 0$ such that the lowest nonzero terms in the series expansion of the $\tau$-function in powers of $\varepsilon$ 
give rational functions of $x, y, t$ that yield ``lump'' solutions. 
 However, they do not make the connection with the cuspidal rational curves that are the limits of the nodal rational curves.
Hence the connection between their work and ours is that we will use 
similar degenerations, but we will show explicitly how the limit curve gives a cuspidal rational curve and how the limit
theta-function corresponds to the polynomial analog of the theta-function on the generalized Jacobian of the cuspidal curve.  

But in fact, to do this, it will be most convenient to look at the corresponding family of points in the Sato Grassmannian rather than looking
explicitly at the limit of the theta-function.  (That can also be done, of course, but the computations are more awkward.)
We will address these points in the next sections.

\section{Degenerating to the bi-cuspidal curve, and the polynomial analog of the theta-function}

From now on, we will specialize to the case $g = 4$ and a particular degeneration from a rational nodal curve to 
a cuspidal curve with two singular points.  The particular choice we will analyze will be the family of curves
constructed from $\mathbb{P}^1$ with the following four pairs of points identified
\begin{align} \label{degenpts}
b_1 = 1 + 2\varepsilon &\text{ and } c_1 = 1 - 2\varepsilon, \nonumber\\
b_2 = 1 + \varepsilon &\text{ and } c_2 = 1 - \varepsilon,\\
b_3 = -1 + 2\varepsilon &\text{ and } c_3 = -1 - 2\varepsilon,\nonumber\\
b_4 = -1 + \varepsilon &\text{ and } c_4 = -1 - \varepsilon.\nonumber
\end{align}
(These are the $z$-coordinates of the points, but we will usually pass to the other coordinate $u = \frac{1}{z}$ to work with
the dualizing differentials and the abelian integrals.)
We assume $\varepsilon$ is real and small enough in absolute value that all eight of these points of $\mathbb{P}^1$ 
are distinct.  For $\varepsilon \ne 0$, each pair of points yields a node; in the limit as $\varepsilon\to 0$, 
two nodes coalesce to a cuspidal singular point at $u = 1$ and the other two nodes coalesce to a cuspidal
singular point at $u = -1$.  This is a flat family and the total $\delta$-invariant of the singular points (that is, the 
arithmetic genus of the whole curve) is constant, equal to $g = 4$.  Each of the limit singular points is an $A_4$
double point with semigroup $\langle 2,5\rangle$.  For instance, a local planar model of the degeneration of one pair of nodes
to a $\langle 2,5\rangle$-cusp at $(x,y) = (0,0)$ is given by the family of parametrizations
\begin{align} \label{cuspparam}
x &= t^2\\
y &= t^5 - 5\varepsilon^2 t^3 + 4\varepsilon^4 t. \nonumber
\end{align}

We change basis in the vector space of dualizing differentials on the nodal curves of the family as follows to
obtain differentials with ``good'' limits on the cuspidal curve as $\varepsilon \to 0$.  We consider these linear
combinations of the differentials from \eqref{nodaldiffs} above:
\begin{align}\label{goodlimits}
\eta_1 &= \frac{1}{\varepsilon} \omega_1 \nonumber \\
\eta_2 &= \frac{1}{\varepsilon^3}(2 \omega_2 - \omega_1),\\
\eta_3 &= \frac{1}{\varepsilon} \omega_3, \text{ and } \nonumber \\
\eta_4 &= \frac{1}{\varepsilon^3}(2 \omega_4 - \omega_3). \nonumber
\end{align}
It is easy to check that the limits of the differentials from \eqref{goodlimits} as $\varepsilon\to 0$ exist and give
\begin{align}\label{thelimits}
\psi_1 = \lim_{\varepsilon\to 0} \eta_1 &= \frac{4}{(u - 1)^2}\ du \nonumber \\
\psi_2 = \lim_{\varepsilon\to 0} \eta_2 &= \frac{-12 u^2}{(u - 1)^4}\ du\\ 
\psi_3 = \lim_{\varepsilon\to 0} \eta_3 &= \frac{4}{(u + 1)^2}\ du \nonumber\\
\psi_4 = \lim_{\varepsilon\to 0} \eta_4 &= \frac{-12 u^2}{(u + 1)^4}\ du. \nonumber
\end{align}
These are the same as the differentials from \eqref{dualizingdiffs}.
In terms of the original affine coordinate $z = \frac{1}{u}$, these can be 
written as 
$$\psi_1 = \frac{-4}{(z - 1)^2}\ dz, \psi_2 = \frac{12}{(z - 1)^4}\ dz, \psi_3 = \frac{-4}{(z + 1)^2}\ dz, \psi_4 = \frac{12}{(z + 1)^4}\ dz,$$
which is exactly the expected form for the dualizing differentials on a curve with 
two $\langle 2,5\rangle$-cusps.  (Note that as observed in \cite{ACL}, the exponents after integrating
with respect to $z$ would correspond to the ``gaps'' ${1,3}$ of this semigroup.)

Then the rest of the derivation follows as before, in particular the use of Theorem~\ref{thetatotau} to derive the KP tau-function.

\section{Comments and Generalizations}

By analogy with other physical situations, multi-lump solutions of KP1 such as the one from \eqref{lumptau} have been called ``bound states''
in \cite{Zetal} and elsewhere.  
There are also other configurations of lumps that have been constructed by other methods and we do not understand whether or how
the ideas here might be applied to all those other sorts of examples yet.

On a more hopeful note, 
much of the construction we have presented, starting from the choice of the pairs of points from \eqref{degenpts} for the family of nodal
curves  generalizes immediately to give families degenerating to cuspidal curves with two $A_{2N}$ double points for 
all $N \ge 1$.  The forms of the dualizing differentials on the cuspidal limits will be parallel, and the results of \cite{ACL}
yield a similar factorization of the leading terms of the polynomial analog of the theta-function on the cuspidal curve.  Theorem 
4.11 of \cite{ACL} applies in general as here to give KP1 solutions.

\begin{conj}
We conjecture that the methods used to generate this example will generalize to give similar real regular 
$M$-lump solutions with $M = \frac{N(N+1)}{2}$ for all $N \ge 1$  starting
from rational curves with two cusps with semigroup $\langle 2,2N+1\rangle$ ($A_{2N}$ double points). 
We expect these solutions will have triangular configurations of lumps with $N$ rows containing $1,2,\ldots, N$ lumps
respectively.  
\end{conj}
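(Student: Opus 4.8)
\medskip
\noindent
\textbf{A strategy toward the Conjecture.}
The plan is to carry out, for arbitrary $N$, the three steps that worked above for $N = 2$: (1) construct a one-parameter family of $2N$-nodal rational curves degenerating to a curve $C_0$ with two $A_{2N}$ double points, together with a basis of dualizing differentials having finite, nonzero limits; (2) pass to a KP1 $\tau$-function via Theorems~\ref{moredetail} and \ref{thetatotau}; and (3) choose the phase parameters $\phi_j$ so that the resulting $\tau$-function is real for all real $(x,y,t)$ and is a sum of squares of real polynomials with a strictly positive constant term. Steps (1) and (2) should go through essentially formally from the results of \cite{ACL}; the substantive content --- and the reason the statement is only conjectured --- is concentrated in step (3).

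For step (1), generalize \eqref{degenpts} by clustering $N$ pairs $b_k = 1 + a_k\varepsilon$, $c_k = 1 - a_k\varepsilon$ at $z = 1$ and $N$ pairs $b_{N+k} = -1 + a_k\varepsilon$, $c_{N+k} = -1 - a_k\varepsilon$ at $z = -1$, with $a_1,\dots,a_N$ distinct positive reals. In the coordinate $w = z - 1$ near the first cluster, the nodal differential \eqref{nodaldiffs} is $\omega_k = -2a_k\varepsilon\,(w^2 - a_k^2\varepsilon^2)^{-1}\,dw$, an odd power series in $\varepsilon$ whose coefficient of $\varepsilon^{2m-1}$ is proportional to $a_k^{2m-1}\,w^{-2m}\,dw$. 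Since $(a_k^{2m-1})_{1 \le k, m \le N}$ is a weighted Vandermonde matrix in the distinct quantities $a_k^2$, hence invertible, one can choose constants $\alpha_{jk}$ so that $\eta_j := \varepsilon^{-(2j-1)}\sum_k \alpha_{jk}\,\omega_k$ has a finite nonzero limit $\psi_j$ as $\varepsilon \to 0$, namely a single pole term $c_j\, w^{-2j}\,dw$ with $c_j \ne 0$ (the lower $\varepsilon$-orders being killed by the Vandermonde conditions, the higher ones vanishing), and similarly for $\psi_{N+1},\dots,\psi_{2N}$ at $z = -1$. Because the pole orders $2, 4, \dots, 2N$ at each branch are exactly one more than the gaps $\{1, 3, \dots, 2N-1\}$ of $\langle 2, 2N+1\rangle$, the $\psi_j$ form a basis of $H^0(C_0, \omega_{C_0})$, matching the normalization recorded after \eqref{thelimits}. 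For step (2), the gaps being $1, 3, \dots, 2N-1$, formula \eqref{WP} gives Weierstrass partition $\lambda = (N, N-1, \dots, 2, 1)$ at each cusp, with $|\lambda| = N(N+1)/2 = M$; Theorem~\ref{moredetail} then gives $\deg\theta = N(N+1)$ with leading monomial $(Z_1^{(1)})^M (Z_1^{(2)})^M$, and Theorem~\ref{thetatotau} (using the Taylor expansions of the $\psi_j$ at the relevant smooth point and the rescaling $y \mapsto iy$) produces $\tau(x,y,t) = \theta(A\mathbf{t} + \mathbf{b})$ up to the usual irrelevant exponential factor. Since the $k$-th differential at a cusp has its first nonzero Taylor coefficient at the base point in degree $2(k-1)$, only the two variables $Z_1^{(1)}, Z_1^{(2)}$ carry full $(x,y,t)$-dependence, the next two carry only $t$, and the rest are constants; and because $u \mapsto -u$ interchanges the two cusps while acting as $y \mapsto -y$ on the $iy$-dependence, the product $Z_1^{(1)} Z_1^{(2)}$ has a positive-definite leading quadratic part in $(x,y)$, so $\tau$ grows like $(x^2 + y^2)^M$ and $u = 2\,\partial_x^2 \ln\tau \to 0$ in all spatial directions once regularity is known.

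Step (3) is where the real difficulty lies. For reality, $\operatorname{Im}\tau$ is a polynomial in $x, y, t$ that is odd in $y$, of the form $\sum_{r\ \text{odd}} y^r P_r(x,t;\phi)$, and one must choose the $\phi_j$ so that every $P_r$ vanishes identically. In the $N = 2$ case a striking collapse occurred: $P_5$ was already a constant, so its vanishing fixed $\phi_1 - \phi_3$; substituting that made $P_3$ a constant, whose vanishing fixed $\phi_2 - \phi_4$; and then $P_1 \equiv 0$ automatically, the remaining phases being free. The mechanism is presumably the automorphism $u \mapsto -u$ combined with the explicit factorization of $\theta$ from the proof of Theorem 5.3 of \cite{ACL}, and the first task is to show that an analogous collapse keeps the system of conditions on the $\phi_j$ consistent for all $N$ and places its solution in the regime needed below. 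For regularity, the key structural input is that Theorem 5.3 of \cite{ACL} writes $\theta = \Theta_1(Z^{(1)})\,\Theta_2(Z^{(2)}) + R$, where each $\Theta_i$ is the suitably translated polynomial theta-function of the partial normalization of $C_0$ that smooths the $i$-th cusp, of degree $M$, and $\deg R < 2M$ --- the general form of \eqref{rearranged}. Under the real substitution the symmetry forces $\Theta_1$ and $\Theta_2$ to become complex conjugates, so $\Theta_1\Theta_2 = A^2 + B^2$ with $A, B$ real, and it remains to show that $R$, after substitution, is itself a sum of squares of real polynomials and that the whole $\tau$-function is strictly positive at $x = y = t = 0$. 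For $N = 2$ the remainder $R$ factored as a positive constant times a product of two manifestly positive polynomials; in general I would attempt this recursively, smoothing the two cusps step by step along the chain $A_{2N} \to A_{2N-2} \to \cdots \to A_2 \to (\text{smooth})$ and exploiting the corresponding layered structure of $\theta$ from \cite{ACL}. A promising alternative --- which would simultaneously yield the conjectured triangular arrangement of $1, 2, \dots, N$ lumps in $N$ rows for $|t|$ large --- is to identify the limiting $\tau$-function, up to an exponential gauge and scaling, with the Gram-determinant $\tau$-functions for triangular ``bound-state'' multi-lumps constructed by Chakravarty and Zowada \cite{CZ1, CZ2} (or the perturbative ones of \cite{Zetal}): a Gram determinant of a Hermitian matrix is a sum of squares by the Cauchy--Binet formula, and the asymptotic lump positions there are governed by the roots of the associated Adler--Moser-type polynomials, which fall into exactly the predicted triangular pattern. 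Establishing either the recursive sum-of-squares decomposition or this identification, and reconciling it with the phase conditions above, is, I expect, the main obstacle.
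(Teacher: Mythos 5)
The statement you are addressing is a \emph{conjecture}, and the paper offers no proof of it --- only supporting evidence in the form of the computed $N=3$ (six-lump) example and the remarks in the final section. Your proposal is likewise not a proof but a research program, and to your credit you say so explicitly. Steps (1) and (2) of your plan track the paper's own sketch closely: the clustering of node pairs generalizing \eqref{degenpts} and \eqref{degenpts6}, the Vandermonde-type change of basis producing differentials with pure pole limits of orders $2,4,\ldots,2N$ (a nice explicit elaboration of what the paper only asserts ``generalizes immediately''), the computation $\lambda=(N,N-1,\ldots,1)$ with $|\lambda|=N(N+1)/2$, and the passage to a KP1 $\tau$-function via Theorems~\ref{moredetail} and~\ref{thetatotau}. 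These parts are correct and add useful detail beyond what the paper records.

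The genuine gap is exactly the one the paper itself names as the unresolved obstacle: showing that for every $N$ there exist phase constants $\phi_j$ making the substituted theta-function real for all real $(x,y,t)$ and expressible as a sum of squares of real polynomials with a strictly positive constant term. Your step (3) restates this obstacle and offers two plausible lines of attack (a recursive sum-of-squares decomposition along the chain of partial normalizations, or an identification with the Gram-determinant $\tau$-functions of \cite{CZ1,CZ2}), but neither is carried out. In particular, nothing in your argument rules out that the system of reality conditions on the $\phi_j$ becomes overdetermined for large $N$, nor that the lower-degree remainder $R$ in the decomposition $\theta=\Theta_1\Theta_2+R$ remains a sum of squares after substitution --- the factorization of that remainder into two manifestly positive factors in \eqref{rearranged} is, as the paper notes, something that ``happens to'' occur when $N=2$. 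A further caveat: your claim that the conjectured triangular arrangement of lumps would follow from the Adler--Moser-type root patterns in \cite{CZ1,CZ2} presupposes the very identification you have not established. In short, your proposal is aligned with the route the paper envisions and fleshes out the formal half, but it leaves the conjecture exactly as open as the paper does.
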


In additional support of this conjecture, we include a plot of a similar solution constructed from a bicuspidal curve with two $\langle 2,7\rangle$
cusps ($A_6$ double points) following the same plan as that used in  the calculations reported above.   This can be obtained from 
a family of rational nodal curves as in \eqref{degenpts}, but with
\begin{align} \label{degenpts6}
b_1 = 1 + 3\varepsilon &\text{ and } c_1 = 1 - 3\varepsilon, \nonumber\\
b_2 = 1 + 2\varepsilon &\text{ and } c_2 = 1 - 2\varepsilon,\nonumber\\
b_3 = 1 + \varepsilon &\text{ and } c_3 = 1 - \varepsilon,\\
b_4 = -1 + 3\varepsilon &\text{ and } c_4 = -1 - 3\varepsilon,\nonumber\\
b_5 = -1 + 2\varepsilon &\text{ and } c_5 = -1 - 2\varepsilon.\nonumber\\
b_6 = -1 + \varepsilon &\text{ and } c_6 = -1 - \varepsilon,\nonumber
\end{align}
The polynomial analog of the theta-function
in this case has degree 12.  The polynomials involved are too complicated  to be readily understandable, so they are omitted.  The pattern
established in Theorem 5.3 of \cite{ACL} is clear, though.  The highest degree term in the polynomial analog of the theta-function
is $Z_1^6 Z_4^6$ since the corresponding partition for each cusp is the triangular $\lambda = (3,2,1)$.  

\begin{figure}
\centering
\includegraphics[width=3in,height=3in]{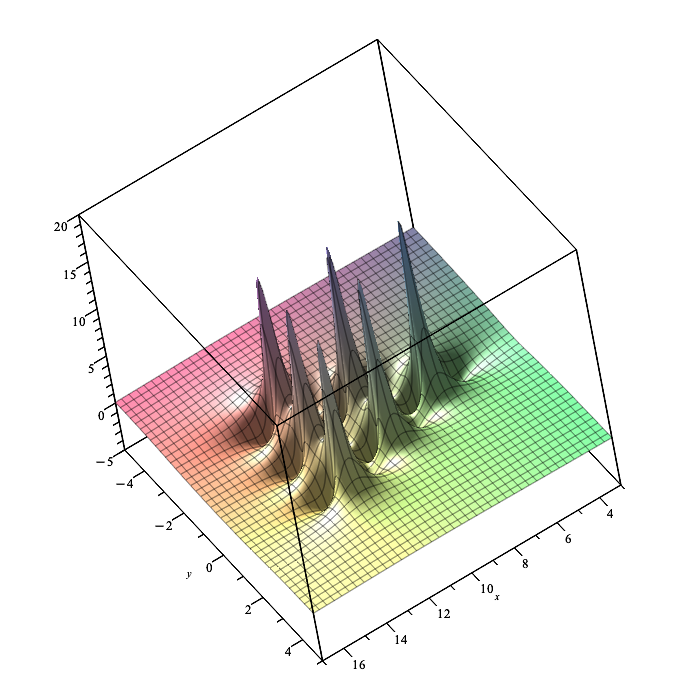}
\caption{The case $N = 3$---another KP1 solution with 6 lumps.}
\end{figure}

The plot in Figure 4 shows the surface of $u(x,y,t)$ from above for the one value $t = -3$ so the lump arrangement is visible.      The lumps seem to coalesce for $t$ around $0$, then emerge in a reflected version
of this same pattern as $t$ increases.

Finally, in Figure 5, we display a time snapshot of another five-lump solution constructed from a rational curve with two cusps 
having semigroup $\langle 3,4\rangle$.  
\begin{figure}
\begin{center}
\includegraphics[width=3in,height=3in]{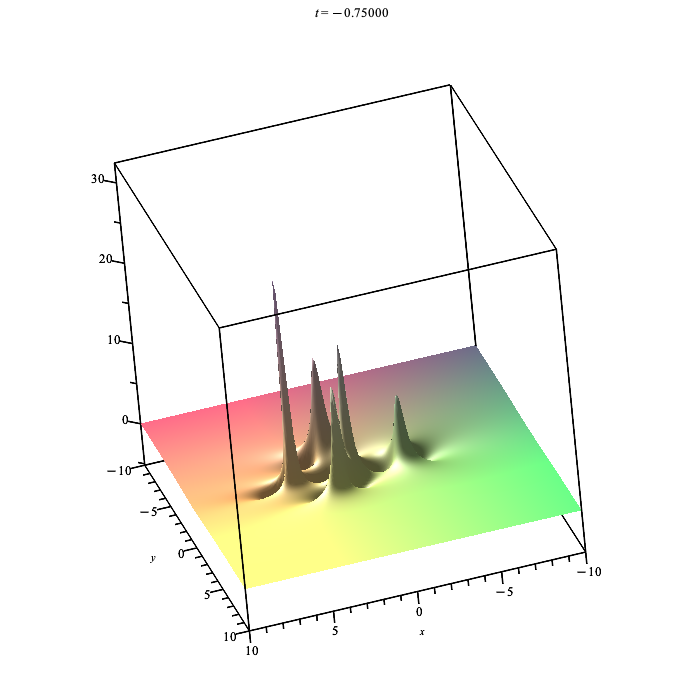}
\end{center}
\caption{A KP1 solution with 5 lumps.}
\end{figure}
The corresponding partitions are both $\lambda = (3,1,1)$ and the polynomial analog of the theta-function has degree $10 = 2 \cdot (3 + 1 + 1)$.  

The obstacle that we have not overcome as of yet to generalize these results 
is proving that there will always be choices of the ``phase factors'' $\phi_j$ as above that produce 
real regular KP1 solutions.  This might be obtained with more detailed information about the form of the terms of lower total degree in the polynomial analog
of the theta-function on the bicuspidal curve.  This seems somewhat similar to, but more complicated than, the form seen in  \eqref{rearranged} when
the degree is larger.  We hope to return to this in the future.

\bibliographystyle{plain}

\end{document}